\numberwithin{equation}{section}
\theoremstyle{plain}
\newtheorem{thm}[theorem]{Theorem}
\newtheorem{lem}[theorem]{Lemma}
\newtheorem{prop}[theorem]{Proposition}
\theoremstyle{definition}
\newtheorem{defn}[theorem]{Definition}
\newtheorem{ex}[theorem]{Example}
\theoremstyle{remark}
\newcommand{\R}{\mathbb{R}}
\newcommand{\C}{\mathbb{C}}
\newcommand{\T}{\mathbb{T}}
\newcommand{\D}{\mathbb{D}}
\newcommand{\N}{\mathbb{N}}
 \journalname{...}
\begin{document}

\title{Polynomial Approach to Cyclicity for Weighted $\ell^p_A$}
 
\author{Daniel Seco \and
 Roberto T\'ellez 
}


\institute{D. Seco \and R. T\'ellez
\at Instituto de Ciencias Matem\'aticas, Calle Nicol\'as Cabrera, 13-15, 28049 Madrid (Madrid), Spain
\and D. Seco \at
Universidad Carlos III de Madrid, Departamento de Matem\'aticas, Avenida de la Universidad 30, 28911 Legan\'es (Madrid), Spain\\
 \email{dseco@math.uc3m.es}
 \and
 R. T\'ellez \at
Universidad Aut\'onoma de Madrid, Departamento de Matem\'aticas, Calle Francisco Tom\'as y Valiente, 7, 28049 Madrid (Madrid), Spain\\
 \email{roberto.tellez@estudiante.uam.es} 
}

\date{Received: date / Accepted: date}

\maketitle

\begin{abstract}
In previous works, an approach to the study of cyclic functions in reproducing kernel Hilbert spaces has been presented, based on the study of so called \emph{optimal polynomial approximants}. In the present article, we extend such approach to the (non-Hilbert) case of spaces of analytic functions whose Taylor coefficients are in $\ell^p(\omega)$, for some weight $\omega$. When $\omega=\{(k+1)^\alpha\}_{k\in \N}$, for a fixed $\alpha \in \R$, we derive a characterization of the cyclicity of polynomial functions and, when $1<p<\infty$, we obtain sharp rates of convergence of the optimal norms.
\end{abstract}

\keywords{Optimal polynomial approximants \and Cyclic functions \and Analytic function spaces}

\subclass{47A16 \and 30E10 \and 30H99}

\section{Introduction}
\label{intro}
A big part of complex analysis and operator theory over the unit disc $\D$ of the complex plane is devoted to the study of the shift operator $S$. The operator $S$ is defined on holomorphic functions  $f(z)$ by $Sf(z)=zf(z)$, therefore shifting the coefficients of the Taylor series around 0 to the next position. This operator acts boundedly on a large class of well-known spaces, as is the case for all of the following. Throughout the present article, $\N$ denotes the non-negative integers.
\begin{defn} \label{def1}
Let $1 \leq p < \infty$ and let $\omega =\{\omega_k\}_{k\in \N}$ be a sequence of positive real numbers  for which there exists a constant $C >0$ such that for all $t, k \in \N$ with $0 \leq t \leq k+1$ we have
\begin{equation}\label{eqn1}
C^{-1} \omega_k \leq \omega_{k+t} \leq C \omega_{k}.
\end{equation} Moreover, we assume that $\omega_0=1$ and
\begin{equation}\label{eqn2}
\lim_{k \rightarrow \infty} \frac{\omega_{k+1}}{\omega_k} = 1.
\end{equation}
We denote by $\ell^p_A(\omega)$ the space of analytic functions $f(z) =\sum_{k \in \N} a_k z^k$  over the disc $\D$ with finite norm
\begin{equation*}
\|f\|_{p,\omega}:= \left(\sum_{k=0}^{\infty} |a_k|^p\omega_k\right)^{1/p}.
\end{equation*} 
We also denote by $\ell^{\infty}_A(\omega)$ the space of analytic functions $f$ for which the norm 
\begin{equation*}
\|f\|_{\infty,\omega} = \sup_{n \in \mathbb N}|a_n|\omega_n 
\end{equation*}
is finite.
\end{defn}
When there is no possible confussion, we will sometimes denote the norm $\|\cdot \|_{p,\omega}$ just by $\|\cdot\|$.
The boundedness of $S$ is clear from either property \eqref{eqn1} or \eqref{eqn2}. From these properties it is also easy to infer the boundedness of $S^{-1}$, as well as the fact that the set $\mathcal{P}$ of all polynomials is a dense subspace of $\ell^p_A(\omega)$. Another by-product of these assumptions is that the disc is the biggest domain where all the elements of the space are holomorphic.

\begin{defn}\label{def2}
We say that a function $f \in \ell^p_A(\omega)$ is \emph{cyclic} if the smallest (closed) subspace of $\ell^p_A(\omega)$ that is invariant under the action of $S$ and contains $f$, $[f]$, is the whole $\ell^p_A(\omega)$.
\end{defn}
The study of cyclic functions in Banach spaces of analytic functions goes back a long time but it was systematically developed by Brown and Shields \cite{BS84}.
Trivially, the constant $1$ is always a cyclic function, and it plays a special role in the study of cyclicity: a function is indeed cyclic if and only if $1 \in [f]$, and that is equivalent to the existence of a sequence of polynomials $\{p_n\}_{n \in \N}$ such that $\|1-p_n f\|$ tends to 0 as $n$ tends to $\infty$. Each of these polynomials, $p_n$, can be taken in $\mathcal{P}_n$, the space of polynomials of degree less or equal to $n$, and this led \cite{BCLSS} to the introduction of \emph{optimal polynomial approximants}. This was generalized in \cite{FMS1}. There, only the case when $p=2$ is treated, as these are Hilbert spaces that are better understood. However, the definition makes sense in larger generality:

\begin{defn}\label{opa}
Let $f \in \ell_A^p(\omega)$, $n \in \N$ and $p_n \in \mathcal{P}_n$. We say that $p_n$ is an \emph{optimal polynomial approximant} to $1/f$ (in $\ell^p_A(\omega)$) if \[\|1-p_n f\| = \inf \|1-Pf\|,\] where the infimum is taken over all $P \in \mathcal{P}_n$. If $p_n$ is an optimal polynomial approximant of order $n$, we call $\|1-p_n f\|$ the \emph{optimal norm} of order $n$ (for $f$).
\end{defn} 

Notice that when $f$ is a function with at least one zero inside $\D$  in any of the spaces described, $f$ is not cyclic since $Pf$ will have that same zero independently of $P \in \mathcal{P}$. This disproves the cyclicity of $f$ because norm convergence of $1-Pf$ towards 0 implies pointwise convergence inside $\D$ of $Pf$ towards 1. On the other hand, if $f$ is a polynomial that does not have zeros in the \emph{closed} disc $\overline{\D}$, then $1/f$ has a Taylor series that converges beyond the boundary exponentially fast towards $1/f$, and so, if we choose $P_n$ to be the Taylor polynomial of $1/f$ of order $n \in \N$, we have $\|1-P_n f\|\rightarrow 0$ exponentially fast with $n$. Hence $f$ is cyclic in that case. Therefore, in the present text we study the simplest critical case: $f$ will be a polynomial whose zeros are contained in the unit circle. Our intention is to study the cyclicity of any such polynomial function $f$ and the decay (or not) with $n$ of the optimal norm for each space $\ell^p_A(\omega)$. Standard arguments extend the results to all functions with a convergence radius larger than 1.

We will focus on a special family of Banach spaces where the sequence $\omega$ is given by a parameter $\alpha$ in the sense that $\omega_k = (k+1)^\alpha$ for all $k \in \N$. The corresponding $\ell^p_A(\omega)$ space is denoted from here onwards by $\ell_A^{p,\alpha}$, and its norm $\|\cdot\|_{\ell^{p,\alpha}_A}$. This set of spaces naturally generalizes the Dirichlet-type spaces.

Going forward, we denote by $Z(f)$ the zero set of a polynomial $f$; for any function $g$ and any $k \in \N$, $\hat{g}(k)$ is the Taylor coefficient of $g$ around $0$ of order $k$; and $q$ will always be the H\"older conjugate of a fixed number $p$. We also need the following notation: For $z = r e^{i\theta}$, and $s \geq 0$ we denote \begin{equation}\label{spow}
z^{<s>}:=r^s e^{-i \theta}.
\end{equation} When $z=0$ we interpret that $z^{<s>}=0$.

With this in mind, our first result is as follows:
\begin{thm}\label{mainthm}
Let $1 < p < \infty$, $f \in \mathcal{P}$ of degree $d \in \N$, and $ Z(f) = \{z_1,...,z_m\} \subset \T$, with respective multiplicities $\{b_1,...,b_m\}$. Let $\omega$ be a weight as in Definition \ref{def1}, let $(p_n)_{n \in \N}$ be the sequence of optimal polynomial approximants to $1/f$ in the norm of $\ell^p_A(\omega)$ and $d_{t,n} = \left(\widehat{1-p_nf}(t)\right)^{<p-1>}\omega_t$. Then, for all $0 \leq t \leq n+d$ we have
\begin{equation}\label{eq:recu}
d_{t,n} = \sum_{i=1}^{m}\sum_{j=1}^{b_i} A_{i,j,n}t^{j-1}z_i^t,
\end{equation}
where the constants $A_{i,j,n}$ are the only solution to the following nonlinear system of $d$ equations: For $l = 1,\,...,\,m$, and $s= 1,...,b_l-1$,
\begin{equation}\label{eq:recucond}
\begin{aligned}
1 &= \sum_{t=0}^{n+d} \left( \sum_{i=1}^{m}\sum_{j=1}^{b_i} A_{i,j,n}t^{j-1}\frac{z_i^t}{\omega_t} \right)^{<q-1>} z_l^t
\\
0 &= \sum_{t=0}^{n+d} \left( \sum_{i=1}^{m}\sum_{j=1}^{b_i} A_{i,j,n}t^{j-1}\frac{z_i^t}{\omega_t} \right)^{<q-1>} t^s z_l^t.
\end{aligned}
\end{equation}
\end{thm}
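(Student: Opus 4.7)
The plan is to combine a variational optimality condition for $p_n$, which produces a linear recurrence for $d_{t,n}$ whose characteristic polynomial is exactly $f$, with the structural condition that the numbers $\widehat{1-p_nf}(t)$ really come from a polynomial of the form $1-p_nf$ with $\deg p_n\le n$, which supplies the remaining $d$ scalar constraints of~(\ref{eq:recucond}).

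First I would record the variational condition. For $1<p<\infty$ the functional $P\mapsto\|1-Pf\|_{p,\omega}^p$ is strictly convex on the finite-dimensional space $\mathcal P_n$, so $p_n$ exists and is unique and is characterised by vanishing of the complex gradient at the minimum: for every $Q\in\mathcal P_n$,
$$\sum_{t=0}^{n+d}\bigl(\widehat{1-p_nf}(t)\bigr)^{<p-1>}\widehat{Qf}(t)\,\omega_t = 0,$$
using $\partial_{\bar w}|w|^p=\tfrac{p}{2}|w|^{p-2}w$ and the identity $|w|^{p-2}\bar w=w^{<p-1>}$. In terms of $d_{t,n}$ this reads $\sum_t d_{t,n}\,\widehat{Qf}(t)=0$. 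Specialising to $Q(z)=z^j$, $j=0,\ldots,n$, gives the linear recurrence of order $d$,
$$\sum_{k=0}^{d}\hat f(k)\,d_{j+k,n}=0,\qquad j=0,\ldots,n,$$
whose characteristic polynomial is exactly $f$. Since $f$ has roots $z_i$ with multiplicities $b_i$ and $\sum b_i=d$, the general solution on $\{0,\ldots,n+d\}$ is the $d$-dimensional span of $\{t^{j-1}z_i^t\}_{i,j}$, which yields the ansatz~(\ref{eq:recu}).

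Next I would impose the correct ``shape'' of $1-p_nf$. Since $(p-1)(q-1)=1$, inverting the $<p-1>$ map gives $c_t:=\widehat{1-p_nf}(t)=(d_{t,n}/\omega_t)^{<q-1>}$. For the $c_t$ to be the Taylor coefficients of a polynomial $P$ of the form $1-p_nf$ with $p_n\in\mathcal P_n$, it is necessary and sufficient that $1-P$ vanish at each $z_l$ to order $\ge b_l$, i.e.
$$P(z_l)=1,\qquad P^{(s)}(z_l)=0\quad(s=1,\ldots,b_l-1),\quad l=1,\ldots,m.$$
Writing these out in Taylor coefficients, multiplying the derivative conditions by $z_l^s$, and then changing basis from the falling factorials $t(t-1)\cdots(t-s+1)$ to the monomials $t^s$ (a triangular, invertible transformation preserving the set of linear equations) yields exactly the $d$ equations of~(\ref{eq:recucond}).

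Finally I would address existence and uniqueness of $(A_{i,j,n})$. Existence follows from existence of $p_n$. For uniqueness, any $(A_{i,j,n})$ solving~(\ref{eq:recucond}) produces, through the chain (\ref{eq:recu})$\to d_{t,n}\to c_t=(d_{t,n}/\omega_t)^{<q-1>}\to P(z)=\sum c_t z^t\to p_n=(1-P)/f$, a polynomial $p_n\in\mathcal P_n$ whose $d_{t,n}$ automatically satisfy the variational condition (the recurrence is valid for every $j'\ge 0$, not just $j'\le n$, because $(z\partial_z)^r f(z_i)$ vanishes for $r<b_i$). Hence this $p_n$ must be the unique optimal approximant, and since each link of the chain is injective (the sequences $t^{j-1}z_i^t$ are linearly independent and $x\mapsto x^{<q-1>}$ is a bijection of $\mathbb C$), the coefficients $(A_{i,j,n})$ are uniquely determined. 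I expect the main subtle point to be the careful bookkeeping of the nonlinear map $x\mapsto x^{<p-1>}$: strict convexity for $1<p<\infty$ is essential, as it guarantees that critical points of the variational problem are global minima, and hence that~(\ref{eq:recucond}) admits a unique solution rather than a family of stationary ones.
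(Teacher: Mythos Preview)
Your proposal is correct and follows essentially the same route as the paper's proof: derive the Birkhoff--James/variational orthogonality conditions $1-p_nf\perp_{p,\omega} z^jf$ to obtain a linear recurrence on $d_{t,n}$ with characteristic polynomial $f$, read off the general solution \eqref{eq:recu}, impose the interpolation constraints $(1-p_nf)(z_l)=1$, $(1-p_nf)^{(s)}(z_l)=0$ to get \eqref{eq:recucond}, and use uniqueness of the metric projection plus linear independence of $\{t^{j-1}z_i^t\}$ for uniqueness of the $A_{i,j,n}$. The only cosmetic differences are that you phrase optimality via strict convexity and complex gradients rather than via Birkhoff--James orthogonality on uniformly convex spaces, and you make explicit the triangular change of basis from falling factorials to monomials that the paper leaves implicit.
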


When $p=1$, the result does not hold because the equations do not determine a unique function $1-p_n f$, but only their arguments. The lack of uniqueness is treated in the next Section.
Notice that when $t > n+d$, we automatically have $d_{t,n}=0$ and the description of $1-p_nf$ is then complete. The above result has a much more ellegant aspect when the zeros of $f$ are distinct, but we chose to present the unified formula since this was left as further work in \cite{BMS1}, at the same time that we extend the theory to non-Hilbert spaces. The simplified result is the following:

\begin{corollary}\label{maincor}
Let $f, p$  and $\omega$ be as in Theorem \ref{mainthm}, $f$ with simple zeros only. Then, for $0 \leq t \leq n+d$, we have 
\[d_{t,n} = \sum_{i=1}^d A_{i,n} z_i^t,\]
where $A_{i,n}$ are the only solution to the nonlinear system where $1 \leq l \leq d$: 
\begin{equation}\label{lastthing1}
1= \sum_{t=0}^{n+d} \left( \sum_{i=1}^d A_{i,n} \frac{z_i^t}{\omega_t} \right)^{<q-1>} z_{l}^t.\end{equation}
Moreover,
\begin{equation}\label{lastthing2}
\|1-p_nf\|^p_{p,\omega}=\sum_{i=1}^d A_{i,n}.
\end{equation}
\end{corollary}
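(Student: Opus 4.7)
The first half of the corollary is a direct specialization of Theorem \ref{mainthm}, while the novel content \eqref{lastthing2} rests on a single algebraic identity together with the fact that the $z_i$ are roots of $f$.

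For the first half, I specialize Theorem \ref{mainthm} by taking $b_i = 1$ for every $i$ and $m = d$. In \eqref{eq:recu} the inner index $j$ is then forced to equal $1$, so $t^{j-1} = 1$, and writing $A_{i,n}$ for $A_{i,1,n}$ gives $d_{t,n} = \sum_{i=1}^{d} A_{i,n} z_i^t$. The second family of equations in \eqref{eq:recucond} ranges over the empty set $s = 1, \ldots, b_l - 1 = 0$, while the first family collapses to \eqref{lastthing1}.

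The key observation for \eqref{lastthing2} is that, by the definition \eqref{spow}, every $a \in \C$ satisfies
\[
a \cdot a^{<p-1>} = |a|^{p},
\]
which is trivial for $a=0$ and follows for $a = re^{i\theta} \neq 0$ from $re^{i\theta} \cdot r^{p-1}e^{-i\theta} = r^p$. Applying this coefficient-wise gives
\[
\|1-p_n f\|_{p,\omega}^p = \sum_{t=0}^{n+d} |\widehat{1-p_n f}(t)|^p \omega_t = \sum_{t=0}^{n+d} \widehat{1-p_n f}(t)\, d_{t,n}.
\]

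Substituting $d_{t,n} = \sum_{i=1}^d A_{i,n} z_i^t$, which holds for $0 \le t \le n+d$, and interchanging sums yields
\[
\|1-p_n f\|_{p,\omega}^p = \sum_{i=1}^{d} A_{i,n} \sum_{t=0}^{n+d} \widehat{1-p_n f}(t)\, z_i^t = \sum_{i=1}^{d} A_{i,n}\, (1-p_n f)(z_i),
\]
where the last equality uses that $1 - p_n f$ has degree at most $n+d$, so the truncated sum of Taylor coefficients against $z_i^t$ equals the full evaluation at $z_i$. Since $f(z_i) = 0$, we conclude $(1-p_n f)(z_i) = 1$, and the desired identity follows. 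I do not anticipate a substantive obstacle: once Theorem \ref{mainthm} is in hand, the whole argument reduces to the pairing identity $a \cdot a^{<p-1>} = |a|^p$ and the vanishing of $f$ at each $z_i$.
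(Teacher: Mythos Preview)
Your proof is correct and follows essentially the same route as the paper: specialize Theorem~\ref{mainthm} for the first part, then use the pairing identity $a\cdot a^{<p-1>}=|a|^p$ together with $(1-p_nf)(z_i)=1$ to obtain \eqref{lastthing2}. The only cosmetic difference is that the paper runs the computation in the opposite direction---multiplying \eqref{lastthing1} by $A_{l,n}$, summing over $l$, and recognizing the result as $\|1-p_nf\|_{p,\omega}^p$---whereas you start from the norm and evaluate $(1-p_nf)(z_i)$ directly; the underlying identity is the same.
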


In addition to these results, we will be able to compute up to a constant the exact rate of decay of the optimal norm in each case and determine which of the critical polynomials are cyclic in each space. As often in mathematical analysis, whenever we write $A(n) \approx B(n)$, we mean that there exists a constant $C>0$ independent of $n$ such that $C^{-1} B(n) \leq A(n) \leq CB(n)$.

\begin{thm}\label{thm2}
Let $f$ be a polynomial of degree $d$ such that $\emptyset \neq Z(f) \subset \T$, $1 < p<\infty$, and $\alpha \in \R$. Then $f$ is cyclic in $\ell^{p,\alpha}_A$ if and only if $\alpha \leq p-1$. In the case of $\ell^{1,\alpha}_A$, $f$ is cyclic if and only if $\alpha < 0$ while in $\ell_A^{\infty,\alpha}$, $f$ is cyclic if and only if $\alpha \leq 1$. Moreover, if $\{p_n\}_{n \in \N}$ is a sequence of optimal polynomial approximants to $1/f$ of the corresponding orders, for $1 \leq p < \infty$ we have
\begin{equation}\label{optnorm}
\|1-p_nf\|^p_{\ell^{p,\alpha}_A} \approx 
 \begin{cases} 
(n+d+1)^{\alpha +1-p}  & \quad if \quad  \alpha < p-1,
\\
\left(\log(n+d+2)\right)^{1-p}          &  \quad if \quad \alpha = p-1,
\\
1              &  \quad if \quad \alpha > p-1.
\end{cases}
\end{equation}
Finally, for $p = \infty$, we have
\begin{equation}\label{optnorminfty}
\|1-p_nf\|_{\ell^{\infty,\alpha}_A} \approx 
 \begin{cases} 
(n+d+1)^{\alpha - 1}  & \quad if \quad  \alpha < 1,
\\
\left(\log(n+d+2)\right)^{-1}          &  \quad if \quad \alpha = 1,
\\
1              &  \quad if \quad \alpha > 1.
\end{cases}
\end{equation} 
\end{thm}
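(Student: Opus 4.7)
The plan is to establish matching upper and lower bounds on the optimal norm $\|1-p_n f\|_{\ell^{p,\alpha}_A}$ in each regime of $\alpha$ versus $p-1$ (respectively $\alpha$ vs $1$ for $p=\infty$, and $\alpha$ vs $0$ for $p=1$); the cyclicity statements then follow immediately, since $f$ is cyclic if and only if this optimal norm tends to zero. I would first reduce to the case $f(z)=(1-\bar{\zeta}z)^b$ with $\zeta\in\T$: the upper bound is submultiplicative in the factors of $f$, and for the lower bound a single factor already forces the rate, isolated by projection or duality.

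For the upper bound in the simple-zero case $f(z)=1-\bar\zeta z$ and $\alpha<p-1$, I would take the F\'ejer-type polynomial $Q_N(z)=\sum_{k=0}^N(1-k/(N+1))(\bar\zeta z)^k$, giving
\[
1-Q_N(z)f(z)=\frac{1}{N+1}\sum_{k=1}^{N+1}(\bar\zeta z)^k,
\]
whose $\ell^{p,\alpha}_A$-norm to the $p$-th power is comparable to $(N+1)^{\alpha+1-p}$. In the critical case $\alpha=p-1$, the logarithmic weights $a_k=1-H(k)/H(N)$ with $H(k)=\sum_{j=1}^k 1/j$ yield $1-Q_Nf=H(N)^{-1}\sum_{k=1}^N k^{-1}(\bar\zeta z)^k$, of norm comparable to $(\log N)^{1-p}$. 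For multiple zeros, take a corresponding $b$-fold product. The endpoints $p=1,\infty$ admit parallel constructions matching \eqref{optnorminfty}.

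For the lower bound I would use the duality $(\ell^{p,\alpha}_A)^*=\ell^{q,\alpha(1-q)}_A$ under the pairing $\langle a,b\rangle=\sum_k a_kb_k$, writing
\[
\|1-p_nf\|_{\ell^{p,\alpha}_A}=\sup\{|\hat h(0)|:\|h\|_{\ell^{q,\alpha(1-q)}_A}\leq 1,\ h\perp f\mathcal{P}_n\}.
\]
A suitable test $h$ on the unit circle (a smoothed truncation of a conjugate Taylor series adapted to $f$) provides the matching lower bound in each regime. Alternatively, one can work directly from Corollary \ref{maincor}: equation \eqref{lastthing2} identifies $\|1-p_nf\|^p$ with $\sum_i A_{i,n}$, and a lower bound on $\sum_i A_{i,n}$ can be extracted by estimating the sums $\sum_{t=0}^{n+d}(t+1)^{-\alpha(q-1)}t^s z_l^t$ appearing on the right-hand side of \eqref{lastthing1} in each of the three regimes, where these sums are elementary Riemann-type estimates.

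The main obstacle is the lower bound when $p\neq 2$: the operation $x\mapsto x^{<q-1>}$ in \eqref{lastthing1} is genuinely nonlinear, which blocks any clean linear-algebra extraction of the $A_{i,j,n}$, and the presence of multiple zeros on $\T$ further complicates the bookkeeping. I expect the cleanest route to be via duality with a carefully chosen witness and the single-factor reduction. The endpoints $p=1$ and $p=\infty$ need separate treatment since Theorem \ref{mainthm} is stated only for $1<p<\infty$, but the same F\'ejer upper-bound constructions and duality-based lower bounds adapt, with the exponents shifting as indicated in \eqref{optnorminfty}.
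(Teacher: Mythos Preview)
Your overall architecture is sound and the lower bound via duality is exactly what the paper does: evaluating $1-p_nf$ at a zero $z_l$ gives $1=\sum_t B_{t,n}z_l^t$, and H\"older immediately yields $\|1-p_nf\|_{p,\omega}\geq\bigl(\sum_{t\leq n+d}\omega_t^{-q/p}\bigr)^{-1/q}$, so you do not need to extract the $A_{i,n}$ from the nonlinear system at all --- the obstacle you flag is not really there. Your upper-bound strategy, however, differs from the paper's and carries two gaps.

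First, the F\'ejer polynomial $Q_N$ gives $\|1-Q_Nf\|_{p,\alpha}^p=(N+1)^{-p}\sum_{k\leq N+1}(k+1)^\alpha$, which matches $(N+1)^{\alpha+1-p}$ only when $\alpha>-1$; for $\alpha\leq-1$ the sum is bounded (or logarithmic) and you get at best $(N+1)^{-p}$, strictly weaker than the claimed rate. The fix is to use the weight-adapted coefficients $c_k=1-\delta_k^q/\delta_{N+1}^q$ with $\delta_k^q=\sum_{t\leq k}\omega_t^{-q/p}$ (this is worked out explicitly in the paper's final example and gives the sharp rate for every $\alpha<p-1$). Second, both your ``$b$-fold product'' for a repeated zero and your ``submultiplicative'' reduction for several distinct zeros silently require that the functions $Q_Nf$ act as uniformly bounded multipliers on $\ell^{p,\alpha}_A$; for instance $1-P_1P_2f_1f_2=(1-P_1f_1)+P_1f_1(1-P_2f_2)$ is only useful if $P_1f_1$ multiplies boundedly. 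The paper handles this via a separate lemma $\|fg\|_{p,\omega}\lesssim\|f\|_1\|g\|_{p,\omega}+\|g\|_1\|f\|_{p,\omega}$ together with a uniform Wiener-norm bound on the test polynomials, and you would need both ingredients as well. Incidentally, the paper's main argument does not build explicit test polynomials at all: it borrows the optimal approximants from the Hilbert space $\ell^{2,\phi}_A$ with $\phi=\alpha/(p-1)$, where they are computable by linear algebra via Theorem~\ref{mainthm}, and shows these are already good enough in $\ell^{p,\alpha}_A$. Your direct construction is arguably cleaner once the two issues above are patched.
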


The first part of this result is a generalization of a characterization of cyclicity for polynomials in Dirichlet-type spaces, achieved by Brown and Shields in \cite{BS84}. The estimates of the optimal norm generalize those in \cite{BCLSS}. In fact, the above result implies a characterization of cyclicity for functions that are holomorphic on a disc of radius bigger than 1 (following the ideas in \cite{FMS1}): for each $p$ and $\alpha$, such functions are cyclic if and only if they have no zeros inside the unit disc, and their zeros on the circle are the zeros of a cyclic polynomial. 

In the Hilbert space case, the existence and uniqueness of optimal polynomial approximants to $1/f$ for any $f$ not identically 0 (and for all $n \in \N$) follows directly from the existence and uniqueness of the orthogonal projection of $1$ onto the finite dimensional subspace of $[f]$ given by $\mathcal{P}_n \cdot f$, and in fact this idea gives an explicit method to find the optimal polynomial approximants. In the present article, we extend the current theory of such approximants to the general case of $1\leq p \leq \infty$, despite the failure of uniqueness in the extreme values of $p$. The Hilbert space proofs known to date for the results we will show rely heavily on the properties of orthogonal projection. Thus we will need a generalized concept of orthogonality due to Birkhoff and James, as well as properties of the metric projections on uniformly convex Banach spaces. The study of invariant subspaces of the shift in $\ell^p_A(1)$ has proved fruitful in \cite{Chengetal1, Chengetal2} and we will make use of many of the ideas in those articles. All these preliminaries, arithmetic properties related to the notation \eqref{spow}, general properties of the spaces in study, and the lack of uniqueness of optimal polynomial approximants  for $p=1$ and $p=\infty$ are introduced in Section \ref{Prelies} below. Then, in Section \ref{Sect3}, we prove Theorems \ref{mainthm} and \ref{thm2} as well as Corollary \ref{maincor}. Finally, we conclude in Section \ref{Sect5} with a simple example in which optimal polynomial approximants can be explicitly computed.

\section{Preliminaries} \label{Prelies}

\subsection{Metric projections and Birkhoff-James orthogonality}
For $1\leq p \leq \infty$, the spaces $\ell^p_A(\omega)$ are all Banach spaces when endowed with the norm $\|\cdot\|_{p,\omega}$. For $1<p<\infty$ the spaces $\ell^p_A(\omega)$ are, in fact, \emph{uniformly convex} Banach spaces \cite[pp.~95-96]{Convex}. This implies that for every vector $x$ and every closed subspace $V$ there is a unique \emph{metric projection} $\hat{x} \in V$ of $x$ onto $V$, meaning that $\|x-\hat{x}\|_B = \inf_{v \in V}\|x-v\|_B$. If $p=2$, they are Hilbert spaces and this metric projection coincides with the orthogonal projection.

Part of the difficulty that arises with developing a good approximation theory on $\ell^p_A(\omega)$ spaces comes precisely from the lack of a Hilbert structure. However, there exists a notion of orthogonality given by Birkhoff and James, used in detail in \cite{Chengetal1, Chengetal2}, which is valid in any Banach space $B$. We say that $x \in B$ is \emph{Birkhoff-James orthogonal} to $y \in B$, and we write $x \perp_B y$, if $\forall \alpha \in \C$ one has $\|x+ \alpha y\|_{B} \geq \|x\|_B$. When $B= \ell_A^p(\omega)$, we write $\perp_{p,\omega}$. If $B$ is a Hilbert space, this is equivalent to the usual definition of orthogonality. For an arbitrary Banach space, this relation is not linear (on the left-hand side parameter) and it is not symmetrical. However, one can sometimes reduce the concept of being Birkhoff-James orthogonal to a subspace to checking orthogonality to a basis, since in many cases this orthogonality is linear \emph{on the second term}. This is the case of the $\ell^p_A(\omega)$ spaces as we will see later.

When the value of $p$ is either $1$ or $\infty$, the spaces are no longer uniformly convex, and this implies that the metric projection may not be well defined, mainly due to the lack of uniqueness. We show now that this affects in particular the case of optimal polynomial approximants, which are non-unique whenever $p=1$ or $\infty$.

\begin{ex}\label{lemmabelow}
In $\ell^1_A(\omega)$, optimal polynomial approximants to \\$1/(1-\omega_1^{-1}z)$ of degree $0$ are not unique. In $\ell^{\infty}_A(\omega)$, optimal polynomial approximants to $1/(1-z^2)$ of degree 1 are not unique.
\end{ex}

\begin{proof}
Let $f(z) = 1-\omega_1^{-1}z$ and let $P_0(z) = c_0$ be a generic polynomial of degree $0$. Then \[\|1-P_0f\|_{1,\omega} = |1-c_0|+|c_0| \geq 1,\] which attains the equality for any $c_0 \in [0,1]$. That settles the case $p=1$. When $p=\infty$, let $g=1-z^2$ and let $P_1(z) = a +bz$ be a generic polynomial of degree $1$. Then \[\|1-P_1g\|_{\infty, \omega} = \sup \{|1-a|\omega_0, |b| \omega_1, |a|\omega_2, |b| \omega_3\}.\]
Denote $\omega^{-}=\inf\{\omega_0, \omega_2\}$, and $\omega^{+}=\sup\{\omega_1, \omega_3\}$. Any choice of $b$ such that \[|b| \leq \frac{\omega^{-}}{2\omega^{+}}\] gives the same result on the above norm expression, depending only on $a$. Any minimizing choice of $a$ makes $P_1$ optimal for values of $b$ within the given range.
 \end{proof}

\subsection{The spaces $\ell^{p,\alpha}_A$}

As mentioned in the introduction, we concentrate on the case where the sequence $\omega$ is given by a parameter $\alpha \in \R$, and $\omega_k = (k+1)^\alpha$ for all $k \in \N$, which we denoted by $\ell_A^{p,\alpha}$. The case $\alpha = 0$ constitutes the family of usual $\ell^p_A$  spaces, for which a theory of invariant subspaces has been furthered recently in \cite{Chengetal1, Chengetal2}. The cases $(p, \alpha) = (1,0), (2,-1), (2,0)$ and $(2,1)$ are respectively the Wiener algebra $\mathcal{A}(\T)$, the Bergman space $A^2$, the Hardy space $H^2$ and the Dirichlet space $\mathcal{D}$. These spaces are classical objects that have been studied in detail, especially in connection with invariant subspaces for the shift, and we refer the reader to the monographs \cite{Dur, DuS, EFKMR, Gar, HKZ} for more information. All these spaces satisfy condition \eqref{eqn1}:
First, suppose that $\alpha \geq 0$. Then for $0 \leq t \leq k+1$, we have
\[\omega_k \leq \omega_{k+t} = (k+t+1)^{\alpha} \leq (2k+2)^{\alpha} = 2^{\alpha}\omega_k.\]
If, on the contrary, $\alpha < 0$, then we have
\[2^{\alpha}\omega_k \leq \omega_{k+t} = (k+t+1)^{\alpha} \leq (k+1)^{\alpha} = \omega_k.\]
This shows \eqref{eqn1}. Condition \eqref{eqn2} is immediate. 
For any $\alpha$, the spaces $\ell^{2,\alpha}_A$ are examples of \emph{reproducing kernel Hilbert spaces} (RKHS), named Dirichlet-type spaces. The key property of a RKHS is the boundedness of the evaluation functionals at the points of the domain (in this case, $\D$). We prove this condition in general.

\begin{lem}\label{bddfunct}
Let $z_0 \in \D$, $1 \leq p \leq \infty$ and $\omega$ a sequence satisfying \eqref{eqn1} and \eqref{eqn2}. Then the functional assigning to a function $f \in \ell^p_A(\omega)$, the value $f(z_0)$ is bounded. In particular, norm convergence of a sequence of functions implies the pointwise convergence on all points of $\D$.
\end{lem}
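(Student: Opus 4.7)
The plan is to bound $|f(z_0)|$ by the absolutely convergent series $\sum_k|a_k||z_0|^k$, and then apply H\"older's inequality (with the convention $q=\infty$ if $p=1$ and $q=1$ if $p=\infty$) to split off the norm $\|f\|_{p,\omega}$. Concretely, writing $|a_k||z_0|^k = (|a_k|\omega_k^{1/p})\cdot(|z_0|^k\omega_k^{-1/p})$ and applying H\"older in the case $1<p<\infty$ yields
\begin{equation*}
|f(z_0)| \;\leq\; \|f\|_{p,\omega}\,\left(\sum_{k=0}^{\infty}|z_0|^{kq}\omega_k^{-q/p}\right)^{1/q},
\end{equation*}
and the analogous one-term estimates handle $p=1$ and $p=\infty$. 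So the whole argument reduces to proving that $\sum_{k=0}^\infty |z_0|^{kq}\omega_k^{-q/p}$ (or $\sup_k|z_0|^k/\omega_k$, or $\sum_k |z_0|^k/\omega_k$) is finite for $|z_0|<1$.

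To get that, I would extract from \eqref{eqn1} the fact that $\omega$ grows and decays at most polynomially. Iterating the inequality $\omega_{k+t}\leq C\omega_k$ for $0\leq t\leq k+1$ along the dyadic chain $1,2,4,\dots,2^n$ gives $\omega_{2^n}\leq C^n\omega_1$, i.e.\ $\omega_k\leq C'\,k^{\log_2 C}$ for all $k\geq 1$; the symmetric iteration yields $\omega_k\geq C''\,k^{-\log_2 C}$. In particular $\omega_k^{\pm 1}$ grows at most polynomially in $k$, so it is dominated by any geometric factor $r^{-k}$ with $r>|z_0|$, which makes the series above convergent. (One could alternatively invoke \eqref{eqn2}, which forces $\omega_k^{1/k}\to 1$ and hence sub-exponential growth directly, but the dyadic iteration of \eqref{eqn1} is cleaner and suffices.)

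Combining the two steps, I obtain $|f(z_0)|\leq C(z_0)\,\|f\|_{p,\omega}$ with $C(z_0)$ depending only on $z_0$, $p$, and $\omega$, proving boundedness of the evaluation functional. The ``in particular'' clause then follows at once: if $\|f_n-f\|_{p,\omega}\to 0$, then for every $z_0\in\D$, $|f_n(z_0)-f(z_0)|\leq C(z_0)\|f_n-f\|_{p,\omega}\to 0$.

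The only non-routine point is the passage from the abstract doubling-type condition \eqref{eqn1} to a usable polynomial envelope on $\omega_k$; this is the main technical step and all other computations are just H\"older plus geometric series. Once that envelope is in hand, the estimate is immediate and uniform on compact subsets of $\D$, so the bound $C(z_0)$ is actually locally bounded in $z_0$, which is a convenient byproduct for later use in the paper.
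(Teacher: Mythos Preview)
Your argument is correct. It differs from the paper's proof in two small but genuine ways. First, the paper does not use H\"older: it simply bounds each coefficient by $|a_n|\leq\|f\|_{p,\omega}\,\omega_n^{-1/p}$ (the trivial $\ell^p\hookrightarrow\ell^\infty$ estimate on the weighted sequence) and then sums, obtaining $|f(z_0)|\leq\|f\|_{p,\omega}\sum_n\omega_n^{-1/p}|z_0|^n$. Second, to show that this last series converges on $\D$, the paper invokes condition~\eqref{eqn2} directly via the ratio test, rather than iterating~\eqref{eqn1} dyadically. Your route is slightly longer but yields a sharper constant $C(z_0)$ (the H\"older dual norm instead of the $\ell^1$ norm of $\omega_n^{-1/p}z_0^n$) and, as a byproduct, an explicit polynomial envelope $\omega_k^{\pm 1}\lesssim k^{\log_2 C}$ that the paper never isolates. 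The paper's route is more economical for the purpose at hand; your remark that~\eqref{eqn2} alone already gives sub-exponential growth is exactly the shortcut the paper takes.
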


\begin{proof}
Given a function $f \in \ell^{p}_A(\omega)$, write $f(z) = \sum_n a_nz^n$ and for $1\leq p < \infty$ and $n \in \N$ observe that $|a_n| \leq \|f\|_{p,\omega} \omega_n^{-1/p}$. In particular, for $z_0 \in \D$, $|f(z_0)| \leq \|f\|_{p,\omega} |h(z_0)|$, where $h(z) =  \sum_n  \omega_n^{-1/p}z^n$ is holomorphic on $\mathbb D$ by \eqref{eqn2} and the ratio test. If $p=\infty$ the same principle works, using $|a_n| \leq \|f\|_{p,\omega} \omega_n^{-1}$, instead. \end{proof}

Let us focus now on the relation between our spaces and Birkhoff-James orthogonality. The reason why such notion of orthogonality is appropriate for our problem is the following: given a closed subspace $V \subset B$ and a vector $x \in B$ with projection onto $V$ denoted by $\hat{x}$, for any $y \in V$ and $\alpha \in \mathbb C$ we have $\hat{x} - \alpha y \in V$, so \[\|(x-\hat{x}) + \alpha y\|_{B} = \|x - (\hat{x} - \alpha y)\|_B \geq \|x - \hat{x}\|_B\] since $\hat{x}$ is precisely the vector in $V$ that minimizes the distance to $x$. Thus, for all $y \in V$, we have \[x - \hat{x} \perp_B y,\]  in analogy with a well-known property of orthogonal projections on Hilbert spaces. 

Moreover, in the spaces $\ell^p_A(\omega)$ there is a nice characterization of Birkhoff-James orthogonality that resembles the Hilbert space situation. If $f,g \in \ell^p_A(\omega)$, then $f \perp_{p,\omega} g$ is equivalent to
\begin{equation}\label{eqn11}
 \sum_{n \in \N} |\hat{f}(n)|^{p-2}\overline{\hat{f}(n)}\hat{g}(n) \omega_n = 0.
\end{equation}

For $\omega \equiv 1$ (also called $\ell^p_A$ spaces), \eqref{eqn11} has been used in \cite{Chengetal1} and is easily generalized to $\ell^p_A(\omega)$ by observing that Birkhoff-James orthogonality is preserved by isometric isomorphisms of Banach spaces like

\begin{align*}
T: \: \ell^p_A(\omega)& \longrightarrow \ell^p_A
\\      \sum_{n \in \mathbb N} a_nz^n&         \longmapsto \sum_{n \in \mathbb N} a_n\omega_n^{1/p}z^n \:.
\end{align*}

This property seems essential to extending the work in \cite{BCLSS, BMS1, FMS1}.

In order to simplify these expressions we make use of the notation in \eqref{spow} from \cite{Chengetal1}. One can easily check the following properties.
\begin{lem}\label{prop3}
For $z, w \in \C \backslash \{0\}$, $s >0$, $1 < p <\infty$, $\alpha \in \R$ and $q = \frac{p}{p-1}$, we have
\begin{enumerate}
\item $(zw)^{<s>} = z^{<s>} w^{<s>}$,
\item $(z^{<s>})^{\alpha} = (z^{\alpha})^{<s>}$,
\item $z z^{<p-1>} = |z|^p$,
\item $(z^{<p-1>})^{<q-1>} = z$.
\end{enumerate}
\end{lem}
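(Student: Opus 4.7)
The plan is simply to work in polar coordinates and verify each identity by direct manipulation, since the definition $z^{<s>} = r^s e^{-i\theta}$ for $z = r e^{i\theta}$ makes all four statements explicit algebraic identities once we split modulus and argument. The degenerate cases (with $z$ or $w$ equal to $0$) are handled separately by the convention $0^{<s>}=0$ and are trivial, so I would dispose of them up front and then assume $z,w \neq 0$.

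For (1), write $z = r_1 e^{i\theta_1}$ and $w = r_2 e^{i\theta_2}$, so that $zw = (r_1 r_2)e^{i(\theta_1+\theta_2)}$; then $(zw)^{<s>} = (r_1 r_2)^s e^{-i(\theta_1+\theta_2)}$, which factors as $r_1^s e^{-i\theta_1}\, r_2^s e^{-i\theta_2} = z^{<s>}w^{<s>}$. For (3), the computation is immediate: $z\, z^{<p-1>} = r e^{i\theta}\cdot r^{p-1} e^{-i\theta} = r^p = |z|^p$. These two require no subtlety beyond keeping track of moduli and arguments.

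For (2), I would compute each side separately: $(z^{<s>})^\alpha = (r^s e^{-i\theta})^\alpha = r^{s\alpha}e^{-i\alpha\theta}$, while $(z^\alpha)^{<s>} = (r^\alpha e^{i\alpha\theta})^{<s>} = r^{\alpha s}e^{-i\alpha\theta}$, and observe that the two expressions coincide. For (4), using the definition twice gives $(z^{<p-1>})^{<q-1>} = (r^{p-1} e^{-i\theta})^{<q-1>} = r^{(p-1)(q-1)}e^{i\theta}$, and the identity $(p-1)(q-1)=1$, which follows directly from $q = p/(p-1)$, yields $r e^{i\theta} = z$.

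There is essentially no obstacle here; the only mild point to be careful about is that for real $\alpha$ the expression $z^\alpha$ is in principle multivalued, so in (2) one must fix a consistent branch (the principal branch suffices), after which both sides are evaluated with the same convention and the identity becomes a tautology in polar form. Similarly, one should note that $s \geq 0$ in the definition is no restriction in applying the identities, since $q-1 = 1/(p-1) > 0$ for $1 < p < \infty$, so all exponents appearing in (3) and (4) lie in the admissible range.
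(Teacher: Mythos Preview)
Your proof is correct and is precisely the direct polar-coordinate verification that the paper has in mind; the paper gives no explicit proof, merely stating that ``one can easily check the following properties,'' and your computations carry out exactly that check. Your remarks on the branch choice in (2) and on the admissible range of exponents in (4) are appropriate caveats that the paper leaves implicit.
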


Notice that by Lemma \ref{prop3}, part (c), the operation $\cdot^{<p-1>}$ generalizes conjugation, in the sense that $z \overline{z} = |z|^2$. With this notation, the characterization of Birkhoff-James orthogonality in \eqref{eqn11} becomes
\begin{equation}\label{eqn12}
 \sum_{n \in \N} \hat{f}(n)^{<p-1>}\hat{g}(n) \omega_n = 0.
\end{equation}

\section{Proofs of main results} \label{Sect3}
\subsection{Proof of Theorem \ref{mainthm}}\label{ex2}

Recall that $\mathcal{P}_n$ denotes the space of polynomials of degree at most $n$ and, for a given $f \in \ell^p_A(\omega)$, the metric projection of $1$ onto $\mathcal{P}_n f$ is denoted directly by $p_n f$: for any $f$ not identically 0, the polynomial $p_n$ is uniquely determined by the projection of 1, and must be an optimal polynomial approximant to $1/f$. Now assume that $f \in \mathcal{P}_d$ is as in the statement, and with Taylor coefficients $a_k$ of order $k$, for $0 \leq k \leq d$. The fact that $p_n f$ is the projection of $1$ onto $\mathcal{P}_n f$, means exactly that for $j=0,...,n$ we have $1 - p_n f \perp_{p,\omega} z^jf$. Then from \eqref{eqn12} for $j=0,...,n$,  we obtain that
\begin{equation}\label{eqn21}
\sum_{t=0}^{n+d}d_{t,n} a_{t+j} = 0 .\end{equation}
The sum finishes in $t=n+d$ since for higher degrees all the terms $d_{t,n}$ are null. The relation between the coefficients in \eqref{eqn21} is a recurrence relation on $d_{t,n}$ whose general solution is of the form \eqref{eq:recu} as described in Section 2.1 of \cite{GK}. The conditions  \eqref{eq:recucond} are obtained by requiring that $(1 - p_n f)(z_l) = 1$ and that $(1 - p_n f)^{(s)}(z_l) = 0$ for any value of $s \geq 1$ lower than the multiplicity of the zero of $f$ at $z_l$. The existence of a solution to the system \eqref{eq:recucond} is clear since $1-p_nf$ exists. By construction, any solution $\{A_{i,j,n}\}_{i,j}$ to the system determines quantities $d_{t,n}$ which then define the Taylor coefficients of a polynomial of the form $1-Q f$ such that $1-Q f \bot_{p,\omega} z^jf$ for $j=0,...,n$. Since the metric projection is unique, so is $1-p_n f$ and thus $d_{t,n}$. This implies that there can only exist one solution $\{A_{i,j,n}\}_{i,j}$ to the system, since $\{t^{j-1}z_i^t\}_{i,j}$ is a basis for the space of solutions to the recurrence relation \eqref{eqn21} and the numbers $A_{i,j,n}$ are the coordinates of $d_{t,n}$ with respect to this basis.

\subsection{Proof of Theorem \ref{thm2}} 

Let $f$ be as in the statement of Theorem \ref{thm2}, $\omega=\{\omega_t\}_{t\in \N}=\{(t+1)^{\alpha}\}_{t \in \N}$ and consider the zero set of $f$ is formed by $z_1,...,z_m \in \T$, $m \leq d= \deg(f)$. Denote $$B_{t,n}=\widehat{1-p_nf}(t).$$ Since $f(z_l)=0$, then for $l=1,...,m$, we have that \[1=\sum_{t=0}^{n+d}  B_{t,n} z_l^t.\]
We can choose any linear combination of these equations to obtain
\[ \sum_{l=1}^m \lambda_l =\sum_{t=0}^{n+d}  B_{t,n} \sum_{l=1}^m \lambda_l z_l^t.\]
H\"older inequality yields
\[ \left| \sum_{l=1}^m \lambda_l \right| \leq \left(\sum_{t=0}^{n+d}  |B_{t,n}|^p \omega_t\right)^{1/p} \cdot \left(\sum_{t=0}^{n+d} \left| \sum_{l=1}^m \lambda_l z_l^t\right|^q \omega_t^{-q/p}\right)^{1/q}.\]
Calling $\mu_l:=\frac{\lambda_l}{  \sum_{l=1}^m \lambda_l }$, and from the definition of the norm in $\ell_A^p(\omega)$ we obtain
\[\|1-fp_n\|_{p,\omega} \geq \sup \left\{ \left(\sum_{t=0}^{n+d} \left| \sum_{l=1}^m \mu_l z_l^t\right|^q \omega_t^{-q/p}\right)^{-1/q} : \mu_l \in \C, \sum_{l=1}^m \mu_l=1\right\}.\]
Now we make the choice $\lambda_l=1$ (and hence $\mu_l=1/m$) for $l=1,...,m$. Bearing in mind that $Z(f) \subset \T$,
\[\left|\sum_{l=1}^m \mu_l z_l^t\right|  \leq 1.\]
This yields the simpler bound
\begin{equation}\label{eq:boundomega}
    \|1-fp_n\|_{p,\omega} \geq \left( \sum_{t=0}^{n+d} \omega_t^{-q/p}\right)^{-1/q}.
\end{equation}
Substituting $\omega_t = (t+1)^{\alpha}$ yields the lower estimate in Theorem \ref{thm2}.

For functions whose optimal polynomial approximants are hard to compute, we may just make an educated guess of other polynomials $P_n \in \mathcal{P}_n$ for whom the rate of convergence of $\|1-P_n f\|$ only differs from the optimal by a bounded multiplicative factor. We will first perform this choice for some simple functions and the general case will be derived later. For any $f$ for which we can find such a collection of polynomials, this would conclude the proof of Theorem \ref{thm2} (for that $f$). In what follows, consider fixed both the function $f$ in the statement of the Theorem and the space $\ell^{p,\alpha}_A$. We will only perform the computations for $1<p<\infty$, but the ideas apply directly to $p=1,\infty$ as well with the same arguments: the non-uniqueness does not affect the results and the only complication is in terms of expressions often containing a supremum instead of a sum of powers of some sequence. If $f$ is a polynomial of degree $d \geq 1$ with $Z(f) \cap \mathbb T \neq \emptyset$, we need $\alpha \leq p-1$ for $f$ to be cyclic (from the lower estimate already proved). We will now find some appropriate $\phi \in \mathbb R$ for which the optimal polynomial approximants to $1/f$ on $\ell^{2,\phi}_A$ are good enough on $\ell^{p,\alpha}_A$ (these will be our choice of $P_n$). Thanks to the Hilbert structure of $\ell^{2,\phi}_A$, optimal polynomial approximants are relatively easy to compute there and the estimates we need are already proved in \cite{BMS1}. 
We write
\begin{equation}\label{eqn61}
\delta_{k} := \left(\sum_{t=0}^{k} (t+1)^{-\alpha \cdot q/p}\right)^{1/q}, \quad \:\: k=0,...,n+d.
\end{equation}

\begin{prop}\label{th:2}
Let $f(z) = (z-e^{i\theta})^d$. Suppose $\alpha \leq p-1$, let $\phi =\frac{\alpha}{p-1}$ and denote the optimal polynomial approximants to $1/f$ on $\ell^{2,\phi}_A$ by $\{P_n\}_{n \in \N}$.
Then, 
\begin{equation*}
\|1-P_n f\|_{\ell^{p,\alpha}_A} = O(1/\delta_{n+d}).
\end{equation*}
\end{prop}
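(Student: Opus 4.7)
The plan is to leverage the explicit description of the Taylor coefficients of $1-P_n f$ provided by Theorem \ref{mainthm} in the Hilbert case $p=2$, and then transfer the known decay rate of the optimal norm in $\ell^{2,\phi}_A$ (from \cite{BMS1}) to the $\ell^{p,\alpha}_A$ norm via Hölder's inequality with respect to a suitable discrete measure. The whole argument hinges on the algebraic identity $\alpha - p\phi = -\phi$, which is equivalent to the choice $\phi = \alpha/(p-1)$.

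First, since for $p=2$ the operation $z^{<1>}$ is complex conjugation, the system in Theorem \ref{mainthm} becomes linear, and for $f(z) = (z-e^{i\theta})^d$, which has the single zero $e^{i\theta}$ of multiplicity $d$, formula \eqref{eq:recu} yields
\[
\widehat{1-P_n f}(t) = (t+1)^{-\phi}\, e^{-i\theta t}\, \overline{R_n(t)}, \qquad 0 \leq t \leq n+d,
\]
where $R_n(t) = \sum_{j=1}^{d} A_{j,n}\, t^{j-1}$ is a polynomial in the variable $t$ of degree at most $d-1$. A direct substitution, using $\alpha - p\phi = -\phi$, produces the two key identities
\[
\|1-P_n f\|_{\ell^{p,\alpha}_A}^p = \sum_{t=0}^{n+d} (t+1)^{-\phi}\, |R_n(t)|^p, \qquad \|1-P_n f\|_{\ell^{2,\phi}_A}^2 = \sum_{t=0}^{n+d} (t+1)^{-\phi}\, |R_n(t)|^2.
\]
The estimate from \cite{BMS1} asserts that the second quantity is $\approx \delta_{n+d}^{-q}$; note that by \eqref{eqn61} we have $\sum_{t=0}^{n+d}(t+1)^{-\phi} = \delta_{n+d}^q$, since $\alpha q/p = \phi$.

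Next, letting $d\mu$ be the discrete measure on $\{0,1,\ldots,n+d\}$ of weights $(t+1)^{-\phi}$, the problem reduces to controlling $\|R_n\|_{L^p(d\mu)}$ in terms of $\|R_n\|_{L^2(d\mu)}$. For $1 < p \leq 2$, a direct application of Hölder's inequality with conjugate exponents $2/p$ and $2/(2-p)$ gives
\[
\|R_n\|_{L^p(d\mu)}^p \leq \|R_n\|_{L^2(d\mu)}^p \cdot \mu(\{0,\ldots,n+d\})^{(2-p)/2} \leq C\, \delta_{n+d}^{-qp/2 + q(2-p)/2} = C\, \delta_{n+d}^{-p},
\]
using $q(p-1)=p$. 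For $p > 2$ I would instead interpolate between $L^2$ and $L^\infty$,
\[
\|R_n\|_{L^p(d\mu)}^p \leq \|R_n\|_{L^\infty([0,n+d])}^{p-2} \|R_n\|_{L^2(d\mu)}^2,
\]
and combine with the bound $\|R_n\|_{L^\infty([0,n+d])} = O(\delta_{n+d}^{-q})$ to reach the same rate $\delta_{n+d}^{-p}$.

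The main obstacle is precisely that Nikolskii-type uniform bound needed in the range $p > 2$. Since $R_n$ has degree at most $d-1$, one can obtain it either by making effective the linear system for the coefficients $\{A_{j,n}\}$ coming from Theorem \ref{mainthm} in the case $p=2$ (deriving estimates of the type $|A_{j,n}|\,(n+d)^{j-1} = O(\delta_{n+d}^{-q})$ via the associated Gram-type determinants), or more abstractly by invoking the classical Nikolskii inequality $\|R\|_{L^\infty([0,N])} \leq C\, \mu([0,N])^{-1/2}\|R\|_{L^2(d\mu)}$, valid for polynomials of bounded degree with a sufficiently regular weight, noting that $\mu([0,n+d])^{1/2} = \delta_{n+d}^{q/2}$ matches the $L^2$ estimate perfectly. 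A routine comparison between the weighted sums and the corresponding weighted integrals on $[0,n+d]$ handles the technical passage between the discrete and continuous versions of the inequality.
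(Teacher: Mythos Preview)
Your reduction to the identities
\[
\|1-P_nf\|_{\ell^{p,\alpha}_A}^p=\sum_{t=0}^{n+d}(t+1)^{-\phi}|R_n(t)|^p,\qquad
\|1-P_nf\|_{\ell^{2,\phi}_A}^2=\sum_{t=0}^{n+d}(t+1)^{-\phi}|R_n(t)|^2,
\]
and the H\"older step for $1<p\le 2$ are correct, and this half of the argument is a genuinely different and cleaner route than the paper's: instead of estimating the coefficients $A_{j,n}$ one by one, you import the known $\ell^{2,\phi}_A$ optimal--norm estimate wholesale and pass to $\ell^{p,\alpha}_A$ by comparing $L^p(d\mu)$ with $L^2(d\mu)$.

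The gap is in the range $p>2$, specifically in your option (b). The Nikolskii inequality
\(
\|R\|_{L^\infty([0,N])}\le C\,\mu([0,N])^{-1/2}\|R\|_{L^2(d\mu)}
\)
for polynomials of bounded degree with weight $d\mu=(t+1)^{-\phi}\,dt$ fails at the critical value $\phi=1$ (i.e.\ $\alpha=p-1$). Take $R(t)=t/N$ with $N=n+d$: then $\|R\|_{L^\infty([0,N])}=1$, while
\[
\int_0^N \frac{R(t)^2}{t+1}\,dt=\frac{1}{N^2}\int_0^N\frac{t^2}{t+1}\,dt\asymp 1,
\qquad \mu([0,N])\asymp\log N,
\]
so $\mu^{-1/2}\|R\|_{L^2(d\mu)}\asymp(\log N)^{-1/2}\to 0$. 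Thus no uniform constant $C$ exists, and the abstract Nikolskii route cannot deliver $\|R_n\|_{L^\infty}=O(\delta_{n+d}^{-q})=O((\log N)^{-1})$ in this case. The bound you need is a feature of the \emph{particular} $R_n$, not of all degree-$(d-1)$ polynomials.

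Your option (a) is exactly what the paper does, and this is where the work lies. The paper applies Cramer's rule to the linear system of Theorem~\ref{mainthm} (for $p=2$), using the asymptotics of the entries $E_{i,j,n}$ from \cite{BMS1} together with the Cauchy-matrix inverse (and a separate treatment when $\phi=1$), to obtain
\(
|A_{j,n}|\,(n+d+1)^{j-1}=O(\delta_{n+d}^{-q})
\)
for each $j$. This yields the uniform pointwise bound $|R_n(t)|=|d_{t,n}|=O(\delta_{n+d}^{-q})$, which the paper then substitutes directly into $\sum_t(t+1)^{-\phi}|R_n(t)|^p$ for \emph{every} $p$, bypassing any split into $p\le 2$ and $p>2$. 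So once you carry out option (a) you recover the paper's proof; the H\"older shortcut only saves work in the subrange $1<p\le 2$.
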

\begin{proof}
It is enough to consider the case where the zero of $f$ is $1$, as an appropriate rotation of the polynomials for $(z-1)^d$ will give the polynomials for any other $(z-e^{i\theta})^d$. From Theorem \ref{mainthm}, we have that \begin{equation*}
d_{t,n} :=  \widehat{1-P_nf}(t) \omega_t = \sum_{i=1}^d A_{i,n}t^{i-1},
\end{equation*} 
where the constants $A_{i,n}$ satisfy the linear system
\begin{align*}
1 &= \sum_{i=1}^d \left(\sum_{t=0}^{n+d}(t+1)^{-\phi}t^{i+1-2}\right) A_{i,n} 
\\
0 &= \sum_{i=1}^d \left(\sum_{t=0}^{n+d}(t+1)^{-\phi}t^{i+j-2}\right) A_{i,n},  \quad j=2,\ldots,d .
\end{align*}
This was already shown in Theorem 7.1 of \cite{BMS1} as well as the following estimates on the solution: with the notation there, let $E_{i,j,n}$ be the $(i,j)-$th element of the matrix $E_n$ defining this linear system where $A_{i,n}$ are the unknowns. If $\phi=i=j=1$, then \[E_{i,j,n} = \log(n+d+2)(1+o(1))\] and in any other case \[E_{i,j,n} = \frac{(n+d+1)^{i+j-1-\phi}}{i+j-1-\phi}(1 + o(1)).\] In \cite{BMS1}, the matrix was shown to be invertible. 

We can approximate the unknown values $A_{i,n}$ by making use of Cramer's rule. If $E_n^{(i,1)}$ is the matrix obtained by replacing the $i$th column of $E_n$ by the first vector of the canonical basis of $\mathbb R^d$, then $A_{i,n} = \det\, E_n^{(i,1)}/ \det \,E_n$. When $\phi = 1$, in the expansion of $\det \,E_n$ there are only terms of order $O(\log(n)n^{s})$ and $O(n^{s})$, $s=\frac{d \cdot (d-1)}{2}$, while $\det \, E_n^{(i,1)}$ has these same terms, but some of them multiplied by $0$ and some of them divided by $E_{1,i,n}$. If $i>1$, it is the leading terms that are multiplied by $0$ whereas if $i=1$ the opposite happens. In either case,
\begin{equation*}
(E_n^{-1})_{i,1} = \frac{C(1+o(1))}{\log(n+d+2)(n+d+1)^{i-1}}.
\end{equation*}

For $\phi<1$, we can even compute the exact constants by using a Cauchy matrix, $M$, given by $M_{i,j} = \frac{1}{i+j-1-\phi}$, for $i, j =1,...,d$. The inverse of such matrix is given in  \cite[pp.~512-515]{Matrices} and we obtain
\begin{equation*}
(E_n^{-1})_{i,j} = \frac{(M^{-1})_{i,j}(1+o(1))}{(n+d+1)^{i+j-1-\phi}}
\end{equation*}
by a similar argument as the one above. Since $A_{i,n} = (E_n^{-1})_{1,i}$ and $d_{t,n} = \sum_{i=1}^d A_{i,n}t^{i-1}$ for $t=0,...,n+d$, we see that
\begin{equation}\label{eqn53}
|d_{t,n}| \leq \sum_{i=1}^d |A_{i,n}|(n+d+1)^{i-1} = \begin{cases}
O\left(\frac{1}{\log(n+d+2)}\right)                     & \phi=1
\\
O((n+d+1)^{\phi - 1})                  & \phi<1
\end{cases}
\end{equation}

Notice these estimates do not depend on $t$.
We can finally recover $1-P_nf$ from all the values of $d_{t,n}$:
\[\|1-P_n f\|_{\ell^{p,\alpha}_A} = \left(\sum_{t=0}^{n+d}|d_{t,n}(t+1)^{-\phi}|^p(t+1)^{\alpha}\right)^{1/p}.\]

With the notation from \eqref{eqn61}, the right-hand side above is estimated making use of  \eqref{eqn53}, giving the bound $O(1/\delta_{n+d})$ and thus concluding the proof.
 \end{proof}

Recall now that $\mathcal{A(\T)}$ denotes the Wiener algebra, which is the space $\ell^1_A(1)$. We denote the Wiener norm by just $\|\cdot \|_1$. The Wiener norm plays a special role with regards to multiplication and therefore having uniform estimates will be useful to show our Theorem. Much of what we need now is already proved in \cite{BMS1}, Theorem 6.1. However, we also need estimates on the $(p, \alpha)$ norms.

\begin{prop}\label{th:1}
Let $f$ be as in the Theorem \ref{thm2} but with simple zeros only. For $\alpha \leq p-1$, let $\phi = \frac{\alpha}{p-1}$ and denote the optimal polynomial approximants to $1/f$ on $\ell^{2,\phi}_A$ by $\{P_n\}_{n \in \N}$.
Then, 
\begin{equation*}
\|1-P_n f\|_{\ell^{p,\alpha}_A} = O(1/\delta_{n+d})
\end{equation*} and there exists some constant $C$ independent of $n\in \N$ such that \[\|1-P_n f\|_{1} \leq C.\] 
\end{prop}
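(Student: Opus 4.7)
The plan is to follow the strategy of Proposition \ref{th:2}, replacing the single zero of multiplicity $d$ by the $d$ distinct simple zeros of $f$ on $\T$, and then to observe that the very same estimates also yield the uniform Wiener bound. Since $P_n$ is the metric projection of $1$ onto $\mathcal{P}_n f$ in the Hilbert space $\ell^{2,\phi}_A$ (so $p=2$, $q=2$ and $z^{<q-1>}=\bar z$), Corollary \ref{maincor} applied with simple zeros $z_1,\dots,z_d\in\T$ yields
\[d_{t,n}:=\overline{\widehat{1-P_nf}(t)}\,(t+1)^{\phi}=\sum_{i=1}^{d}A_{i,n}\,z_i^{t},\qquad 0\le t\le n+d,\]
where the $A_{i,n}$ solve the linear system $\sum_{i=1}^{d}\overline{A_{i,n}}\,E_{l,i,n}=1$ for $l=1,\dots,d$, with
\[E_{l,i,n}=\sum_{t=0}^{n+d}\frac{(z_l\,\bar z_i)^{t}}{(t+1)^{\phi}}.\]

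Next I would invoke the asymptotic analysis of this matrix carried out in Theorem 6.1 of \cite{BMS1}. The diagonal entries $E_{l,l,n}=\sum_{t=0}^{n+d}(t+1)^{-\phi}$ satisfy $E_{l,l,n}\approx \delta_{n+d}^{q}$ (which is $\log(n+d+2)$ when $\phi=1$). For $l\neq i$ the quantity $z_l\bar z_i$ lies on $\T\setminus\{1\}$, and summation by parts gives $E_{l,i,n}=O(1)$ uniformly in $n$. After rescaling, $E_n$ is eventually diagonally dominant, with $|\det E_n|\approx\delta_{n+d}^{qd}$ and the $(d-1)\times(d-1)$ cofactors of size $O(\delta_{n+d}^{q(d-1)})$, so Cramer's rule delivers $|A_{i,n}|=O(\delta_{n+d}^{-q})$ for each $i$. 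Since $|z_i|=1$, this yields the pointwise bound $|d_{t,n}|\le\sum_i|A_{i,n}|=O(\delta_{n+d}^{-q})$, independent of $t$.

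Finally, the relation $\phi=\alpha/(p-1)$ gives $\alpha-p\phi=-\phi$ and $q(1-p)=-p$, so
\[\|1-P_nf\|_{\ell^{p,\alpha}_A}^{p}=\sum_{t=0}^{n+d}|d_{t,n}|^{p}(t+1)^{-\phi}=O\bigl(\delta_{n+d}^{-pq}\bigr)\cdot\delta_{n+d}^{q}=O\bigl(\delta_{n+d}^{-p}\bigr),\]
which is the first claim, and
\[\|1-P_nf\|_{1}=\sum_{t=0}^{n+d}|d_{t,n}|(t+1)^{-\phi}=O\bigl(\delta_{n+d}^{-q}\bigr)\cdot\delta_{n+d}^{q}=O(1),\]
which is the uniform Wiener estimate.

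The main obstacle is the step borrowed from \cite{BMS1}: showing that the oscillatory off-diagonal sums $\sum_{t=0}^{n+d}(z_l\bar z_i)^{t}(t+1)^{-\phi}$ stay uniformly bounded in $n$ while the diagonal blows up like $\delta_{n+d}^{q}$, and then controlling the cofactor expansion in Cramer's rule to get a single rate $O(\delta_{n+d}^{-q})$ for all the $A_{i,n}$. Once this diagonal-dominance picture is in place, the arithmetic relating $\phi$, $p$, $\alpha$, $q$ is routine and delivers both estimates simultaneously.
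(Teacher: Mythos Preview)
Your argument is essentially the paper's: set up the Hilbert-space linear system from Corollary \ref{maincor}, import from \cite{BMS1} the estimate $|A_{i,n}|=O(\delta_{n+d}^{-q})$, deduce the same uniform bound on $|d_{t,n}|$, and then use $\alpha-p\phi=-\phi$ to turn this into the $\ell^{p,\alpha}_A$ estimate. The paper cites Corollary~2.2 of \cite{BMS1} for the bound on $A_{i,n}$ and defers the Wiener statement entirely to Theorem~6.1 there; you instead obtain the Wiener bound by the very same computation $\sum_t|d_{t,n}|(t+1)^{-\phi}\le O(\delta_{n+d}^{-q})\cdot\delta_{n+d}^{q}$, which is a small gain in self-containment.

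One technical slip in your sketch: the claim that the off-diagonal sums $\sum_{t=0}^{n+d}(z_l\bar z_i)^{t}(t+1)^{-\phi}$ are $O(1)$ uniformly in $n$ is only true for $\phi\ge 0$. When $\phi<0$, summation by parts gives a bound of order $(n+d+1)^{-\phi}$, not $O(1)$. This does not damage the argument, since the diagonal entries are of order $(n+d+1)^{1-\phi}$, so the rescaled matrix is still eventually diagonally dominant and Cramer's rule still yields $|A_{i,n}|=O(\delta_{n+d}^{-q})$; but the justification you wrote needs this correction for negative $\phi$.
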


\begin{proof}
Write $d_{t,n} = \widehat{1-P_n f}(t) \cdot (t+1)^{\phi}$. From Theorem \ref{mainthm}, we have that $d_{t,n} = \sum_{i=1}^d A_{i,n} \overline{z_i}^t$, where the constants $A_{i,n}$ satisfy the linear system
\begin{equation*}
1 = \sum_{i=1}^d \left(\sum_{t=0}^{n+d}(t+1)^{-\phi}\overline{z_{i}}^tz_l^t\right) A_{i,n} \quad l=1,\ldots,d .
\end{equation*}
Thus, by Corollary 2.2 in \cite{BMS1}, we have 
\begin{equation*}
|A_{i,n}| = \begin{cases}
O((n+d+1)^{\phi-1})      & \phi < 1
\\
O(\log^{-1}(n+d+2))       & \phi = 1.
\end{cases}
\end{equation*}
The same bound, but with a different choice of constants, holds then for $d_{t,n} = \sum_{i=1}^d A_{i,n} \overline{z_i}^t$ yielding
\begin{equation}\label{eqn71}
\|1-P_n f\|^p_{\ell^{p,\alpha}_A} = \sum_{t=0}^{n+d}|d_{t,n}(t+1)^{-\phi}|^p(t+1)^{\alpha} = \sum_{t=0}^{n+d}|d_{t,n}|^p(t+1)^{-\phi}.
\end{equation}
Whenever $\alpha < p-1$ we obtain that 
\begin{equation*}
\|1-P_n f\|^p_{\ell^{p,\alpha}_A} = O((n+d+1)^{\alpha + 1 - p}).
\end{equation*}
For $ \alpha = p-1$, the right-hand side on \eqref{eqn71} is $O(\log^{1-p}(n+d+2))$ instead.
The Wiener norm estimates are contained in Theorem 6.1 of \cite{BMS1}.
 \end{proof}

What remains in order to prove the more general result is to notice that the $f$ in the statement of Theorem \ref{thm2} is divisible by $z-e^{i\theta}$ for some $\theta \in [0,2\pi)$ and divides \emph{some} polynomial of the form $g^{d_0}$ where $g$ is a polynomial with simple zeros only (take $Z(f)=Z(g)$ but the zeros in $g$ with multiplicity 1 and $d_0$ to be the maximum of the multiplicity of the zeros of $f$). In that case we know the Theorem \ref{thm2} for $g$ and the same upper estimate for the optimal norm for $g^{d_0}$ is automatically true for $f$ as well: $Q=g^{d_0}/f$ is a polynomial of fixed degree and the weight $\omega_t=(t+1)^{\alpha}$ is comparable to $\omega_{t+s}$ for a fixed $s \in \N$. From there, if $P_n$ are the polynomials achieving the estimate for $g^{d_0}$, $Q\cdot P_{n-d}$ achieve the corresponding estimate for $f$. The only remaining step is to control what happens to powers of functions for which we already have estimates of the optimal norm, for which we use the following Lemma, which we prove in Section \ref{Sect4}. We acknowledge that the proof is due to Raymond Cheng and we consider it of independent interest.

\begin{lem}\label{th:multiplier}
Let $f,g \in \ell^p_A(\omega)\cap\ell^1_A$ for some $1 \leq p \leq \infty$. Then, $fg \in \ell^p_A(\omega)\cap\ell^1_A$. Moreover, there exists a constant $C_{p,\omega}$ such that
\[ \|fg\|_{p,\omega} \leq C_{p,\omega}(\|f\|_{1}\|g\|_{p,\omega} + \|f\|_{p,\omega}\|g\|_{1}).\]
\end{lem}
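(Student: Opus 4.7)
The plan is to bound the Cauchy product $\widehat{fg}(n)=\sum_{k=0}^{n}\hat f(k)\hat g(n-k)$ by splitting it at the midpoint and using the weight condition \eqref{eqn1} to transfer $\omega_{n}$ onto the factor with the larger index. The resulting expressions reduce to the classical Young convolution inequality $\|F\ast G\|_{\ell^{p}}\leq \|F\|_{\ell^{1}}\|G\|_{\ell^{p}}$ applied to non-negative sequences on $\N$.

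Concretely, first I would write $\widehat{fg}(n)=A_{n}+B_{n}$, where $A_{n}$ collects the terms with $0\leq k\leq \lfloor n/2\rfloor$ and $B_{n}$ collects the terms with $\lfloor n/2\rfloor<k\leq n$. In $A_{n}$ one has $n-k\geq k$, so writing $n=(n-k)+k$ with $k\leq n-k+1$ and invoking \eqref{eqn1} gives $\omega_{n}\leq C\omega_{n-k}$. In $B_{n}$ one has $n-k\leq k+1$, so writing $n=k+(n-k)$ and invoking \eqref{eqn1} again yields $\omega_{n}\leq C\omega_{k}$. Hence, for $1\leq p<\infty$,
\[
\omega_{n}^{1/p}|A_{n}|\leq C^{1/p}\sum_{k=0}^{\lfloor n/2\rfloor}|\hat f(k)|\,\omega_{n-k}^{1/p}|\hat g(n-k)|,
\]
together with the symmetric estimate for $\omega_{n}^{1/p}|B_{n}|$ in which the roles of $f$ and $g$ are interchanged.

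Next, I would extend each inner sum to run over all $0\leq k\leq n$ (this only enlarges the right-hand side since all summands are non-negative) and recognize the resulting expressions as truncations of the discrete convolutions of $\{|\hat f(k)|\}\in \ell^{1}$ with $\{\omega_{k}^{1/p}|\hat g(k)|\}\in \ell^{p}$, and vice versa. Applying Young's inequality termwise and then taking $\ell^{p}$ norms in $n$ gives
\[
\Bigl(\sum_{n}\omega_{n}|A_{n}|^{p}\Bigr)^{1/p}\!\leq C^{1/p}\|f\|_{1}\|g\|_{p,\omega},\quad \Bigl(\sum_{n}\omega_{n}|B_{n}|^{p}\Bigr)^{1/p}\!\leq C^{1/p}\|f\|_{p,\omega}\|g\|_{1}.
\]
A final triangle inequality in $\ell^{p}_{A}(\omega)$ yields the claimed bound with $C_{p,\omega}=C^{1/p}$. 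The cases $p=1$ and $p=\infty$ follow by the same argument, using the endpoint versions $\ell^{1}\ast \ell^{1}\hookrightarrow \ell^{1}$ and $\ell^{1}\ast \ell^{\infty}\hookrightarrow \ell^{\infty}$ of Young's inequality; for $p=\infty$ the outer $\ell^{p}$ norm is replaced by a supremum, but the termwise estimates and the splitting are unchanged.

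The main obstacle is purely bookkeeping: verifying that the split at $\lfloor n/2\rfloor$ falls inside the regime where the doubling condition \eqref{eqn1} can actually be invoked on each side (since \eqref{eqn1} only guarantees $\omega_{k+t}\leq C\omega_{k}$ when $t\leq k+1$, one must be careful that the longer of the indices $k$ and $n-k$ plays the role of $k$). Once that is in place, the proof is a clean reduction to Young's inequality on $\N$, and fits $f,g\in \ell^{1}_{A}$ into the picture exactly where it is needed.
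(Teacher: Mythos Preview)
Your proposal is correct and follows essentially the same route as the paper: both split the Cauchy product at the midpoint, use the doubling condition \eqref{eqn1} to move $\omega_n$ onto the factor with the larger index, and then control the resulting sum by a convolution estimate. The only cosmetic difference is that the paper unfolds the convolution bound by hand via Jensen's inequality, whereas you package the same computation as an appeal to Young's inequality $\|F\ast G\|_{\ell^p}\le\|F\|_{\ell^1}\|G\|_{\ell^p}$; the underlying argument is identical.
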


With this tool, we are now ready to prove the only remaining question to establish Theorem \ref{thm2}.

\begin{prop}\label{th:main}
Let $f= \prod_{i=1}^m(z-z_i)^{d_i}$ be a polynomial of degree $d$ with $Z(f) \subset \mathbb T$, and $g(z)=\prod_{i=1}^m(z-z_i)$. For $\alpha \leq p-1$, let $\phi = \frac{\alpha}{p-1}$ and denote the optimal polynomial approximants to $1/g$ on $\ell^{2,\phi}_A$ by $\{q_n\}_{n \in \N}$. Let  $d_0 = \max_{1\leq i\leq m} d_i$, denote $\sigma(n) = \left \lfloor{\frac{n+d}{d_0}}\right \rfloor-m$ and write $P_{n} = (q_{\sigma(n)}g)^{d_0}/f$. 
Then \begin{equation*}
\|1-P_n f\|_{\ell^{p,\alpha}_A} = O(1/\delta_{n+d}).
\end{equation*}
\end{prop}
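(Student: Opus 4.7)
The plan is to reduce everything to the already-proved Proposition \ref{th:1}, which handles the simple-zero polynomial $g$. The key observation is that since each zero of $f$ has multiplicity at most $d_0$, the polynomial $f$ divides $g^{d_0}$; consequently $P_n=(q_{\sigma(n)}g)^{d_0}/f$ is a genuine polynomial, and its degree is $d_0(\sigma(n)+m)-d\leq n$ by the very definition of $\sigma(n)=\lfloor (n+d)/d_0\rfloor-m$. In particular $P_n\in\mathcal{P}_n$, and the quantity to estimate simplifies to
\[
\|1-P_nf\|_{\ell^{p,\alpha}_A}=\|1-r_n^{d_0}\|_{\ell^{p,\alpha}_A},\qquad r_n:=q_{\sigma(n)}g.
\]
From Proposition \ref{th:1} applied to $g$ I inherit two bounds: $\|1-r_n\|_{\ell^{p,\alpha}_A}=O(1/\delta_{\sigma(n)+m})$ and the Wiener bound $\|1-r_n\|_{1}\leq C$. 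Since $\sigma(n)+m=\lfloor (n+d)/d_0\rfloor$ and $d_0$ is fixed, a direct computation with the definition \eqref{eqn61} shows $\delta_{\sigma(n)+m}\approx\delta_{n+d}$ in each of the three regimes of $\alpha$ (polynomial growth for $\alpha<p-1$, logarithmic for $\alpha=p-1$, bounded for $\alpha>p-1$), so the first estimate is already of the shape we want.

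The naive factorisation $1-r_n^{d_0}=(1-r_n)\sum_{k=0}^{d_0-1}r_n^k$ is not directly useful through Lemma \ref{th:multiplier}, because $r_n\to 1$ pointwise and thus $\sum r_n^k$ has $(p,\omega)$- and $\ell^1$-norms tending to $d_0$, not to $0$. Instead I would write
\[
1-r_n^{d_0}=1-\bigl(1-(1-r_n)\bigr)^{d_0}=\sum_{k=1}^{d_0}\binom{d_0}{k}(-1)^{k+1}(1-r_n)^k
\]
and estimate each summand individually. By induction on $k$, using Lemma \ref{th:multiplier} together with the Banach-algebra inequality $\|(1-r_n)^{k-1}\|_{1}\leq\|1-r_n\|_{1}^{k-1}=O(1)$,
\[
\|(1-r_n)^k\|_{p,\omega}\leq C_{p,\omega}\bigl(\|1-r_n\|_1\|(1-r_n)^{k-1}\|_{p,\omega}+\|1-r_n\|_{p,\omega}\|(1-r_n)^{k-1}\|_1\bigr)=O(1/\delta_{\sigma(n)+m}),
\]
so the small-norm factor is preserved at every step. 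The triangle inequality applied to the binomial expansion and the comparability $\delta_{\sigma(n)+m}\approx\delta_{n+d}$ then yield $\|1-r_n^{d_0}\|_{\ell^{p,\alpha}_A}=O(1/\delta_{n+d})$, as claimed.

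The main obstacle I expect is precisely the recognition that the iteration has to be organised along powers of $1-r_n$ rather than through the geometric-series factor: otherwise every application of Lemma \ref{th:multiplier} loses a factor $\|1-r_n\|_1$ that only knows how to stay bounded, not to decay. Once the binomial form is written down, the Banach-algebra structure of the Wiener algebra takes care of all auxiliary $\ell^1$-bounds, and the remaining work is the routine bookkeeping of checking $P_n\in\mathcal{P}_n$ and the comparability of the weights $\delta_{\sigma(n)+m}$ and $\delta_{n+d}$.
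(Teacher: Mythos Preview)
Your argument is correct and follows essentially the same strategy as the paper's proof: both reduce to Proposition~\ref{th:1}, invoke Lemma~\ref{th:multiplier} together with the Banach-algebra property of $\ell^1_A$, and iterate over the exponent $d_0$. The paper organises the induction through the identity $1-r^{k+1}=(1-r^{k})+(1-r)-(1-r^{k})(1-r)$ (which forces a preliminary induction to bound $\|1-r_n^{k}\|_1$), whereas your binomial expansion in powers of $1-r_n$ is slightly more direct; you also make explicit the degree check $P_n\in\mathcal{P}_n$ and the comparability $\delta_{\sigma(n)+m}\approx\delta_{n+d}$, which the paper leaves implicit.
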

\begin{proof}
We claim first that $\|1-(q_{\sigma(n)}g)^{d_0}\|_1$ is uniformly bounded as $n \rightarrow \infty$, which we will prove by induction on $d_0$.  For $d_0=1$, that is part of Proposition \ref{th:1}. Otherwise, notice that $\ell^1_A$ is a multiplicative algebra and therefore,
\begin{equation}\label{eqn81}
\|1- (q_{\sigma(n)}g)^{d_0+1}\|_{1} \lesssim \|1  - (q_{\sigma(n)}g)^{d_0}\|_1 +  \|q_{\sigma(n)}g\|_1^{d_0}\|1 - q_{\sigma(n)}g\|_{1}.
\end{equation}
Since the 3 quantities on the right-hand side of \eqref{eqn81} are uniformly bounded in $\ell^1_A$-norm, this concludes our first claim.
To prove our result, we proceed again by induction on $d_0$. Suppose we have proved our result for $d_0 \leq k$ and take $d_0=k+1$. Then $\|1-P_n f\|_{\ell^{p,\alpha}_A}$ is bounded by
\begin{equation}\label{eqn82}
\|1  - (q_{\sigma(n)}g)^{k}\|_{\ell^{p,\alpha}_A}  + \|1  - (q_{\sigma(n)}g)\|_{\ell^{p,\alpha}_A}
+ \|(1-(q_{\sigma(n)}g)^{k}) \cdot (1- q_{\sigma(n)}g)\|_{\ell^{p,\alpha}_A}.\end{equation}
The first two terms in \eqref{eqn82} would be decaying at the speed claimed in the Proposition, by the induction hypothesis. The third term is controlled by applying Lemma \ref{th:multiplier} together with our previous claim. \end{proof}

\subsection{Estimate for multiplication} \label{Sect4}

In order to establish Theorem \ref{thm2}, we still need to give the proof of Lemma \ref{th:multiplier}. As we mentioned above, the proof is due to Raymond Cheng. We consider the result natural enough for it to exist in the literature but we have not found it anywhere.
\begin{proof}
Since $\ell^1_A$ is a multiplicative algebra, we only need to check that $fg \in \ell^p_A(\omega)$. If $k \notin 2\mathbb N$, $\frac{k}{2}$ will denote $\left \lfloor{\frac{k}{2}}\right \rfloor+1 $.
Suppose first that $1 \leq p < \infty$ and consider the convex function $\varphi(x) = |x|^p$. It follows from Jensen's inequality that

\begin{equation}\label{eq:jensen}
\left(\sum_{t=0}^{k/2} |a_t||b_{k-t}| \right)^p \leq \left(\sum_{t=0}^{k/2} |a_t|\right)^{p-1}\left(\sum_{t=0}^{k/2} |a_t||b_{k-t}|^p  \right).
\end{equation}

For $f = \sum_k a_kz^k$ and $g = \sum_k b_kz^k$, the product $fg$ satisfies
\begin{equation*}
\|fg\|_{p,\omega}^p = \sum_{k=0}^{\infty} |\sum_{t=0}^k a_t b_{k-t}|^p\omega_k,
\end{equation*}
which can be splitted into the values of $t \leq k/2$ and the rest. We obtain
\begin{equation}\label{eqn92}
\|fg\|_{p,\omega}^p \leq \sum_{k=0}^{\infty} \left(\sum_{t=0}^{k/2} |a_t| |b_{k-t}|\right)^p\omega_k + \sum_{k=0}^{\infty} \left(\sum_{t=0}^{k/2} |b_t| |a_{k-t}|\right)^p\omega_k.
\end{equation}
We denote the two terms on the right-hand side of \eqref{eqn92} by $A_1(f,g)$ and $A_1(g,f)$. It is clear that it suffices to show that $A_1(f,g)$ is bounded (and then apply that to $(g,f)$ too). Indeed, making use of \eqref{eq:jensen}, we see that
\begin{equation}\label{eqn93}
A_1(f,g) \leq \|f\|_1^{p-1} \sum_{s=0}^{\infty}\sum_{t=0}^{s}|a_t||b_s|^p\omega_{s+t}.
\end{equation}
Using the doubling property of the weight, the term $\omega_{s+t}$ in \eqref{eqn93} can be substituted by a constant $C_\omega$ times $\omega_s$. What we have then is that
\begin{equation*}
A_1(f,g) \leq C_{\omega}\|f\|^p_1\|g\|^p_{p,\omega}.
\end{equation*}
For $p = \infty$, the proof is similar: the $\|\cdot \|_{\infty,\omega}$ norm is separated in the cases when $t \leq k/2$ or not. This yields
\begin{equation*}
\|fg\|_{\infty,\omega} \leq \sup_{k \in \mathbb N} \left( \sum_{t=0}^{k/2}|a_t||b_{k-t}|\omega_k   \right) + \sup_{k \in \mathbb N} \left( \sum_{t=0}^{k/2}|b_t||a_{k-t}|\omega_k   \right),
\end{equation*}
which we can again denote by $A_2(f,g) + A_2(g,f)$. To find a bound for $A_2(f,g)$, the same method as before works. \end{proof}

\subsection{Proof of Corollary \ref{maincor}}

The first part of the Corollary \ref{maincor} is just a particular case of Theorem \ref{mainthm}, when the zeros are simple. What remains is to show \eqref{lastthing2}.
Multiplying both sides of \eqref{lastthing1} by $A_{l,n}$ and summing on $l=1,...,d$, we obtain
\[\sum_{i=1}^d A_{i,n} = \sum_{t=0}^{n+d} \left| \sum_{i=1}^d A_{i,n}z_i^t\right|^q \omega_{t}^{1-q},  \]
where we have used Lemma \ref{prop3} part (c).
Now notice that 
\[ \left| \sum_{i=1}^d A_{i,n}z_i^t\right|^q \omega_{t}^{1-q} = \left|\widehat{1-p_nf}(t)\right|^p \omega_t,\]
and then we can sum on both sides for $t=0,...,n+d$, concluding the proof.

\section{A simple example} \label{Sect5}
We present here an explicit computation of optimal polynomial approximants to $1/f$, where $f(z) = 1 - z^d$ for $d \in \mathbb N$, $d \geq 1$, on any $\ell^p_A(\omega)$. Again, we perform the computations for $1<p<\infty$ but the same ideas apply in the extremal cases.  First, we reduce the problem to studying the case $d=1$. Write $P_n = \sum_{t=0}^n c_tz^t$ for any polynomial $P_n \in \mathcal{P}_n$ and notice that
\[\|1-fP_n\|^p_{p,\omega} = |1-c_0|^p + \sum_{t=1}^{d-1} |c_t|^p\omega_t + \sum_{t=d}^{n} |c_t-c_{t-d}|^p\omega_t + \sum_{t=n+1}^{n+d}|c_{t-d}|^p\omega_t.\]
Hence, the choice $c_t = 0$ for $t \notin d\N$ cannot increase the norm $\|1-fP_n\|_{p,\omega}$, which means that optimal polynomial approximants are of the form $P_n(z) = Q(z^d)$ for some polynomial $Q$ of degree $\left \lfloor{\frac{n}{d}}\right \rfloor$. The same computation shows that polynomials of such form satisfy $\|1-fP_n\|^p_{p,\omega} = \|1-(1-z)Q\|_{p,\tilde{\omega}}^p$, where $\tilde{\omega}_t = \omega_{dt}$, implying that $P_n$ is the optimal polynomial approximant to $1/(1-z^d)$ of order $n$ on $\ell^p_A(\omega)$ precisely when $Q$ is the optimal polynomial approximant to $1/(1-z)$ of order $\left \lfloor{\frac{n}{d}}\right \rfloor$ on $\ell^p_A(\tilde{\omega})$. Thus, we may restrict ourselves to the case $d=1$.
\\
\\
Now from \eqref{eq:boundomega} we see that, in order to obtain optimal polynomial approximants to $1/(1-z)$ of order $n$ on $\ell^p_A(\omega)$, it is enough to find $p_n \in \mathcal{P}_n$ such that $\|1-(1-z)p_n\|^p_{p,\omega} = \left( \sum_{t=0}^{n+1} \omega_t^{-q/p}\right)^{-p/q}$. As in \eqref{eqn61}, we write
$$
\delta_{k} := \left(\sum_{t=0}^{k} \omega_t^{-q/p}\right)^{1/q}, \quad \:\: k=0,...,n+1.$$
Then we let
$$p_n(z) := \sum_{t=0}^{n} (1-\delta^q_t/\delta^q_{n+1})z^t $$
and using $-q/p = -q+1$ it is immediate to check that $\|1-(1-z)p_n\|^p_{p,\omega} = \sum_{t=0}^{n+1} \omega_t^{-q/p}\delta_{n+1}^{-pq} = \delta_{n+1}^{-p}$, as desired. One way of obtaining this solution is by solving system \eqref{lastthing1}, which is a direct computation in this case.

\begin{acknowledgements}
We acknowledge financial support from the Spanish Ministry of Economy and
Competitiveness, through the ``Severo Ochoa Programme for Centers of
Excellence in R\&D'' (SEV-2015-0554) and through grant
MTM2016-77710-P. We are also grateful to Raymond Cheng and to an anonymous referee for helpful comments and careful reading.
\end{acknowledgements}

\bibliographystyle{amsplain}

\end{document}